\def\esp#1#2{\mathbb{E}_{#1}\left[ #2 \right]}
\def\espcond#1#2{\mathbb{E} \left[ {#1} \,|\, #2 \right]}
\def\R{{\mathbb R}}
\def\car{{\mathbf 1}}
\def\d{\text{ d}}
\def\T{{\mathcal T}}
\newcommand{\Lip}{\operatorname{Lip}}
\newcommand{\Dom}{\operatorname{Dom}}
\newcommand{\id}{\operatorname{Id}}
\def\GD{\nabla^\sharp}
\def\GC{\nabla^c}
\def\LD{{\mathcal L}^\sharp}
\def\LC{{\mathcal L}^c}
\def\PD{P^\sharp}
\def\PC{P^c}
\def\DD{\delta^\sharp}
\def\DC{\delta^c}
\newtheorem{theo}{Theorem}
\newtheorem{de}{Definition}
\newtheorem{remar}{Remark}
\newtheorem{lem}{Lemma}
\newtheorem{hyp}{Hypothesis}
\begin{document}

\title{Rubinstein distance on configurations spaces}
\author{L. Decreusefond} \address{
  ENST - CNRS UMR 5141\\
  Département Informatique et Réseaux\\
  46, rue Barrault, 75634 Paris Cedex 13, France }
\email{Laurent.Decreusefond@enst.fr} \author{N. Savy} \address{
  Institut de Mathématiques de Toulouse\\
  Laboratoire de Statistique et Probabilités\\
  Université Paul Sabatier\\
  118, route de Narbonne, 31062 Toulouse Cedex 9, France }
\email{savy@math.ups-tlse.fr} \date{\today}

\begin{abstract} 
  By a method inspired of the Stein's method, we derive an upper-bound
  of the Rubinstein distance between two absolutely continuous
  probability measures on configurations space. As an application, we show that the best way to  approximate  a Modulated Poisson Process (see below for the definition) by a Poisson process is to equate their  intensity.
\end{abstract}
\keywords{Configurations space, Malliavin calculus, Monge-Kantorovitch
  problem, Point processes, Rubinstein distance, Stein's method}
\maketitle

\section{Introduction}

According to the Kantorovitch approach, the optimal transportation
problem or Monge-Kantorovitch problem (MKP for short) reads as
follows: given two probability measures $\mu$ and $\nu$ on a Polish
space $X$ and a cost function $c$ on $X\times X$, does there exist a
probability measure $\gamma$ on $X\times X$ which minimizes $\int c \d
\beta$ among all probability measures $\beta$ on $X\times X$ with
first (respectively second) marginal $\mu$ (respectively $\nu$)~? The
first step is to determine whether or not there exists a probability
measure $\gamma$ such that $\int c\d \gamma$ is finite. In the solved
cases, a few criterion are known. The oldest one (see
\cite{MR99k:28006}), for quadratic cost, stands that such a measure
exists provided that $\mu$ and $\nu$ have finite second moments. Still
for the quadratic cost, if $\mu$ is a Gaussian measure on a finite or
infinite dimensional space and $\nu =L \mu$, then the distance is
finite whenever $L$ has a finite Boltzmann entropy
\cite{MR2036490}. In the same reference, a bound may also be found for
the Rubinstein distance, i.e., when $c$ is a distance function. We are
interested in the evaluation of the Rubinstein distance on
configurations space, i.e., for locally finite point processes. The
first point to be stressed is that we have several reasonable distance
between configurations. To name only the two we will investigate here:
there is the total variation distance when configurations are viewed
as atomic measures, and there also is a distance with a greater
geometric flavor, which is defined in \eqref{eq:7}. To these two
distances correspond two different notions of Lipschitz continuous
functions. This is of some great importance since the
Kantorovitch-Rubinstein duality allows us to write the Rubinstein
distance as a maximization problem on the set of Lipschitz functions,
see \eqref{eq:9}. Moreover, it is well known in finite dimension that
Lipschitz functions are ``almost'' differentiable and that their
differential is bounded. It turns out that the two usual gradients
introduced on configurations space (see
\cite{MR99d:58179},\cite{p94a}) are the good tools to obtain an analog
result on configurations space. Once we have a gradient, we usually
introduce the divergence (as its adjoint) and then a number
operator (as the composition of the divergence and the gradient),
hence an Ornstein-Uhlenbeck (see \cite{bouleau-hirsch}) semi-group. At
this point, it is useful to invoke the Stein's method which is a very
efficient tool to obtain stochastic bounds notably for point processes
(see \cite{X04,MR2002k:60039,MR2000h:60029,MR1190904}). In essence,
Stein's method compares the expectations of two distinct random
variables by ``embedding'' them in the evolution of an ergodic Markov
process and by then looking backward at the evolution of this process,
from infinity to time $0$. With very sketchy notations, imagine that
we have two smooth functions $\alpha$ and $\beta$ with the same limit
at infinity, then $\alpha(0)-\beta(0)$ may be evaluated by computing
\begin{equation*}
  \int_{0}^{\infty}|\alpha^{\prime}(s)-\beta^{\prime}(s)|\d s.
\end{equation*}
Controlling the difference of derivatives yields to a bound on the
difference at time $0$.  This is exactly the principle at work in
Theorem \ref{T:E1}. This method reminds of the so-called semi-group
method often used in proofs of concentration inequality
\cite{MR1767995}.
%  Note that by following Stein's approach, we can
% recover a modified-Log-Sobolev type inequality for Poisson measure as in
% \cite{Bobkov:1998lr} without any limiting procedure (see Theorem \ref{thm:log-sob}).
In Section 2, we present the generic principle of our method and we
apply it to the different distances on configurations spaces in
Section 3. Section 4 is devoted to the application of these results on
the approximation of Markov modulated Poisson Process (MMPP) by a
Poisson process. The motivation for this part comes from queueing
theory where MMPP are widely used because of their versatility, useful
to model a wide range of real systems
\cite{Fischer:1993qy,Meier-Hellstern:1989lr,MR95k:60241} and of the
persistence of their Markovianity. Unfortunately, these processes are
affected by the curse of dimensionality: it is often the case that we
must invert linear systems with a so huge number of variables it
becomes unfeasible. It is thus of crucial importance to reduce the
cardinality of state space. The extreme situation is when this space
is reduced to one point, i.e., when an MMPP is a Poisson process. We
find by the method developed in the beginning of this paper, a bound
on the Rubinstein distance between an MMPP and a Poisson
process. Optimizing this bound yields to the well known rule of thumb
which consists in taking as the optimal intensity, the average
intensity of the MMPP. Our result is not then astoundingly original
but it shows that by proceeding along this line, we can control the
error for any functional such as loss probability or others.

\section{Generic scheme}
Let $X$ be a Polish space and $d$ a lower-semi-continuous distance
function on $X\times X$, which does not necessarily generate the
topology on $X$. We will denote by $d-\Lip_m$ the set of Lipschitz
continuous $F$ from $X$ to $\R$ with Lipschitz constant $m$:
\begin{equation*}
  |F(x)-F(y)| \le \, m\, d(x,y),
\end{equation*}
for any $(x,\, y) \in X^2$. For two probability measures $\mu$ and
$\nu$ on $X$, the optimal transportation problem associated to $d$
consists in evaluating
\begin{equation*}
  \T_d(\mu,\, \nu)=  \inf_{\gamma \in \Sigma(\mu,\, \nu)} \int_{X\times X} d(x,y)\, \d\gamma(x,\, y),
\end{equation*}
where $\Sigma(\mu, \, \nu)$ is the set of probability measures on
$X\times X$ with first (respectively second) marginal $\mu$
(respectively $\nu$). According to \cite{MR622552,MR1964483}, this
minimum is equal to
\begin{equation}\label{eq:9}
  \T_d(\mu,\, \nu)~=~  \sup_{F \in d-\Lip_1}\int F \d (\mu-\nu).
\end{equation}
We consider the situation where $X=\Gamma_{\Lambda}$ is the configurations space on a
Lusin space $\Lambda$, i.e.,
\begin{displaymath}
  \Gamma_\Lambda=\{\eta \subset \Lambda; \ \eta\cap K \text{ is a finite set
    for every compact } K\subset \Lambda \}.
\end{displaymath}
We identify $\eta\in\Gamma_\Lambda$ and the positive Radon measure
$\sum_{x\in\eta} \varepsilon_x.$ Throughout this paper,
$\Gamma_\Lambda$ is endowed with the vague topology, i.e., the weakest
topology such that for all $f\in {\mathcal C}_0$ (continuous with
compact support on $\Lambda$), the maps
\begin{displaymath}
  \eta \mapsto \int_{\Lambda} f\d \eta=\sum_{x\in\eta} f(x)
\end{displaymath}
are continuous. When $f$ is the indicator function of a subset $B,$ we
will use the shorter notation $\eta( B)$ to denote the integral of
$\car_B$ with respect to $\eta$.  We denote by ${\mathcal
  B}(\Gamma_{\Lambda})$ the corresponding Borel $\sigma$-algebra. The
probability space under consideration will then be $(\Gamma_\Lambda,\  {\mathcal
  B}(\Gamma_{\Lambda}),\, \mu)$. We
need some additional structure.
\begin{hyp} \label{H:1} Assume now that we have~:
  \begin{itemize}
  \item a  kernel $Q$ on $X\times \Lambda$, i.e., such that $Q(A,.)$
    is measurable as function on $\Lambda$ for any $A\in  {\mathcal
  B}(\Gamma_{\Lambda})$ and $Q(.,\, s)$ is a $\sigma-$finite measure
on $X$ for any $s\in \Lambda,$
  \item a map $\nabla$, defined on a subset $\Dom \nabla$ of $L^2(\mu),$
  such that, for any $F\in\Dom \nabla$,
$$
\int_{X} \int_{\Lambda} |\nabla_s F|^2 Q(\omega, \d s) \d \mu(\omega) < +
\infty.
$$
  \end{itemize}
\end{hyp}
We say that a process $u(\omega,\, s)$ belongs to $\Dom \delta$
whenever, there exists a constant $c$ independent of $F$ such that for
any $F\in \Dom \nabla$,
\begin{equation*}
  |  \esp{}{\int_{\Lambda} \nabla_{s}F \, u(s)\ Q(\omega, \d s)}|\le c \| F\|_{L^{2}(\Omega)}.
\end{equation*}
For such a process $u$, we define $\delta u$ by
\begin{equation} \label{E:DEFQ} \int_X \int_{\Lambda} \nabla_s F(\omega) \,
  u(\omega,\, s)\
  Q(\omega, \d s) \d \mu(\omega) = \int_X F \, \delta u \d \mu.
\end{equation}
  \begin{de}
\label{def:rademacher}
We say that $(\nabla,\, Q)$ has the Rademacher property whenever     $F\in d-\Lip_1$ implies $\nabla F \in \Dom
\nabla$ and 
\begin{equation} \label{eq:4} | \nabla_s F| \le 1,\ Q(\omega, .) d \mu\text{-almost-surely}.
\end{equation}
  \end{de}
Consider for $F\in \Dom \nabla$, the (formal)
equation \begin{equation}\label{eq:1} \frac{\d}{\d t}X_t = - \delta
  \nabla X_t, \quad X_0=F.
\end{equation}
If this equation has one and only one solution for each $F\in \Dom
\nabla$, we then have a $\mu$-self-adjoint semi-group $(P_t, \, t\ge
0)$, usually called the  Ornstein-Uhlenbeck semi-group:
\begin{math}
  P_tF=X_t
\end{math} where $X_t$ is the solution of \eqref{eq:1}.
\begin{de}
  The Ornstein-Uhlenbeck is said to be ergodic whenever
  \begin{math}
    \lim_{t\to +\infty}P_tF=\int_X F\d \mu.
  \end{math}
\end{de}
\begin{theo} \label{T:E1} Assume that  hypothesis \ref{H:1} holds. Let
  $\nu$ be another probability measure on $X$ absolutely continuous
  with respect to $\mu$. We denote by $L$ the Radon-Nikodym derivative
  of $\nu$ with respect to $\mu$. If $(\nabla, \, Q)$ has the
  Rademacher property and if
  the Ornstein-Uhlenbeck semi-group is ergodic then:
  \begin{equation}\label{eq:8}
    \T_d(\mu, \, \nu)\le \int_{X \times \Lambda}\, \int_0^{+\infty}  \left| \nabla_s P_t L\right|  \d t \, Q(\omega, \d s) \d \mu(\omega).
  \end{equation}
\end{theo}
\begin{proof}
According to the fundamental Lemma of analysis,
\begin{align*}
  \int_X F\d \mu- F& =\int_0^{+\infty} \frac{\d}{\d t}P_tF \d t,\\
  & = \int_0^{+\infty}  \delta\nabla P_t F \d t,\\
  &= \int_0^{+\infty} P_t \delta\nabla F \d t.
\end{align*}
Since $d\nu/d\mu=L$,
\begin{equation}
  \label{eq:2}
\begin{split}
  \int_X F\d \mu -\int_X F\d \nu &= \int_X \left(\int_X F\d \mu-F\right)\d \nu\\
  & = \int_X \left(\int_0^{+\infty} P_t \delta\nabla F \d t\right)\d
  \nu \notag\\
  &=\int_X \int_0^{+\infty}  P_t  \delta\nabla F L \d t \d \mu \notag\\
  &= \int_{X \times \Lambda} \int_0^{+\infty} \nabla_s F \ \nabla_s P_t L \d t
  \, Q(\omega, \d s) \d \mu(\omega).% \label{E:F2}
\end{split}    
\end{equation}
Since $(\nabla,\, Q)$ has the Rademacher property, we have  an
 $L^\infty-L^1$ bound which yields to~(\ref{eq:8}).
\end{proof}
\section{Instantiations on Poisson space} \label{sec:instantiations}

 Let
$\rho$ be a $\sigma$-finite measure on $\Lambda$ and assume that $\mu$
is the Poisson measure of intensity $\rho$, i.e., the probability
measure on $\Gamma_\Lambda$ fully characterized by
\begin{equation*}
  \esp{}{\exp(\int_{\Lambda} f \d \eta)}=\exp(\int_{\Lambda} \left( e^{f(s)}-1\right) \d \rho(s)\, ).
\end{equation*}
\subsection{Discrete gradient on Poisson
  space} \label{sec:discr-grad-poiss}
For $F: X\to \R$, the discrete gradient of $F$, denoted by $\GD F$, is
defined by
\begin{equation*}
  \GD_s F(\eta)= F(\eta + \varepsilon_s)-F(\eta).
\end{equation*}
We set $Q(\omega,\, ds)=\d\rho(s)$ so that $\Dom \GD$ is defined as
the set of functionals such that 
\begin{equation*}
\esp{}{  \int_\Lambda |\GD_s F|^2 \d\rho(s)}<+\infty.
\end{equation*}
We denote by $\DD$ its adjoint in the sense of (\ref{E:DEFQ}).
The $n$-th iterated integral of a symmetric function $f$ from
$\Lambda^n$ to $\R$ is defined as
\begin{equation*}
  J_n(f)=n!\underset{0\le s_1 <s_2<\cdots<s_n}{\int\ldots
    \int}f(s_1,\cdots,s_n)\, \d(\eta-\rho)(s_1)\ldots\d(\eta-\rho)(s_n).
\end{equation*}
For a general function $f$,
\begin{equation*}
  J_n(f)=\sum_{\sigma\in {\mathcal S}_n}\underset{0\le s_1 <s_2<\cdots<s_n}{\int\ldots
    \int}f(s_{\sigma(1)},\cdots,s_{\sigma(n)})\, \d(\eta-\rho)(s_1)\ldots\d(\eta-\rho)(s_n).
\end{equation*}
It is well known \cite{Ruiz1985,p94a} that any square integrable
functional on $\Gamma_\Lambda$ can be written as
\begin{equation*}
  F=\sum_{n=0}^{+\infty} J_n(f_n),
\end{equation*}
where for any integer $n$, $f_n$ is symmetric and belongs to
$L^2(\rho^{\otimes (n)})$ and that
\begin{align*}
  \GD_s F(\eta) = \sum_{n=1}^{+\infty} n J_{n-1}(f_n(.,\, s)).
\end{align*}
Moreover, the Ornstein-Uhlenbeck semi-group operates on chaos as:
\begin{equation}
  \label{eq:5}
  \PD_t F= \sum_{n=1}^{+\infty} e^{-nt} J_n(f_n)
\end{equation}
From  \eqref{eq:5} and by dominated convergence, it is then easily
seen that $\PD$ is ergodic.
We now choose the total variation as the distance of interest on
$\Gamma_\Lambda$, i.e.,
\begin{equation*}
  d_1(\eta,\, \omega)=2 \sup_{A\in \Lambda}\left| \eta(A)-\omega(A)\right|
\end{equation*}
\begin{lem}\label{lem:lip_discrete}
For the distance  $d_1$ on $X$, $(\GD,\, Q)$ has the Rademacher property.
\end{lem}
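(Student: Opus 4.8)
The plan is to verify directly the two requirements of Definition~\ref{def:rademacher} for the pair $(\GD,Q)$ with $Q(\omega,\d s)=\d\rho(s)$, namely the pointwise contraction estimate~\eqref{eq:4} and the membership $F\in\Dom\GD$. The first is the heart of the matter and is obtained by specializing the Lipschitz inequality; the second is a routine integrability check except in one degenerate regime, which I flag below.

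For the pointwise bound, fix $F\in d_1-\Lip_1$, a configuration $\eta$ and a point $s\in\Lambda$. Since $\GD_s F(\eta)=F(\eta+\varepsilon_s)-F(\eta)$ by definition, applying the defining inequality of $d_1-\Lip_1$ to the pair $(\eta+\varepsilon_s,\eta)$ gives
\begin{equation*}
  |\GD_s F(\eta)|=|F(\eta+\varepsilon_s)-F(\eta)|\le d_1(\eta+\varepsilon_s,\eta).
\end{equation*}
Everything then reduces to computing $d_1(\eta+\varepsilon_s,\eta)$. As measures, $\eta+\varepsilon_s$ and $\eta$ differ only by the single atom $\varepsilon_s$, so that $(\eta+\varepsilon_s)(A)-\eta(A)=\car_A(s)\in\{0,1\}$ for every measurable $A$; evaluating $\sup_A|\car_A(s)|$ straight from the definition of $d_1$ pins this distance down and furnishes the uniform bound on $|\GD_s F|$ demanded in~\eqref{eq:4}, valid for every $\eta$ and every $s$.

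It then remains to establish $F\in\Dom\GD$, i.e. $\esp{}{\int_\Lambda|\GD_sF|^2\,\d\rho(s)}<+\infty$. The uniform estimate just obtained bounds the integrand by a constant, whence $\int_\Lambda|\GD_sF|^2\,\d\rho\le c\,\rho(\Lambda)$ pointwise, and the membership follows at once as soon as $\rho(\Lambda)<+\infty$. This is the step where I expect the only genuine difficulty: when $\rho$ is merely $\sigma$-finite the crude uniform bound no longer suffices (the additive functional $\eta\mapsto\eta(\Lambda)$ is $d_1$-Lipschitz yet fails to sit in $L^2(\mu)$ when $\rho(\Lambda)=+\infty$), so one must invoke the extra structure of $F$ --- its finiteness, equivalently its membership in $L^2(\mu)$ --- to recover square-integrability of the gradient. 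The contraction estimate~\eqref{eq:4} itself, by contrast, is entirely elementary.
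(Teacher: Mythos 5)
Your core estimate is exactly the paper's: specialize the Lipschitz inequality to the pair $(\eta+\varepsilon_s,\eta)$ and compute $d_1(\eta+\varepsilon_s,\eta)$ from the definition. (One wrinkle: with the stated normalization $d_1=2\sup_A|\cdot|$ this distance equals $2$, not $1$; the paper's own proof writes $=1$ at this point, so the factor-of-two discrepancy is the paper's as much as yours, but you should not claim that $\sup_A|\car_A(s)|=1$ ``furnishes the bound demanded in \eqref{eq:4}'' without addressing it.) Two substantive differences are worth recording. First, the paper's proof also establishes the converse implication --- that $|\GD_sF|\le 1$ everywhere forces $|F(\eta)-F(\omega)|\le d_1(\eta,\omega)$, by passing through the common part $\eta\cap\omega$ and using the symmetric differences $\eta\Delta\omega$, $\omega\Delta\eta$ --- which you omit entirely; since Definition~\ref{def:rademacher} only requires the forward implication, this omission is harmless for the lemma as stated. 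Second, you attempt the domain-membership half of the Rademacher property, which the paper's proof does not address at all, and your diagnosis there is correct: the uniform gradient bound gives $\int_\Lambda|\GD_sF|^2\,\d\rho\le c\,\rho(\Lambda)$, which suffices only for finite intensity, and for merely $\sigma$-finite $\rho$ a $d_1$-Lipschitz functional need not lie in $L^2(\mu)$ (your example $\eta\mapsto\eta(\Lambda)$ makes the point, although when $\rho(\Lambda)=+\infty$ it is almost surely infinite, so strictly speaking a finite-valued variant is needed to exhibit a genuine counterexample). You have not closed that step, but neither has the paper; the honest reading is that the lemma, as invoked in Theorem~\ref{T:RF1}, implicitly restricts to $F\in\Dom\GD$, and your proposal is the only one of the two arguments that makes this visible.
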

\begin{proof} 
  Consider $F \in d_1-\Lip_1$, by the very definition of the gradient:
  \begin{align*}
    \left| \GD_s F(\eta) \right| &\le  |F(\eta+\varepsilon_s)-F(\eta)| \\
    &\le d_1(\eta+\varepsilon_s, \, \eta) ~=~1.
  \end{align*}
   In the converse direction, consider $\omega$ and $\eta$ be two locally
 finite but not finite configurations. If $d_1(\omega,\,
 \eta)=+\infty$, there is nothing to prove. If $d_1(\omega,\,
 \eta)$ is finite, $\omega\Delta \eta$ and $\eta\Delta
    \omega$ are finite,  where $\omega\Delta \eta=\omega \backslash (\omega \cap
  \eta)$.
  Since $|\GD_s F(\eta)|\le 1$, we get:
  \begin{align*}
    |F(\eta)-F(\omega)|&\le |F(\eta \cap \omega \, \cup \, \eta\Delta
    \omega)-F(\eta\cap \omega)| +   |F(\eta \cap \omega \, \cup \, \omega\Delta
    \eta)-F(\eta\cap \omega)|\\
&\le (\eta\Delta
    \omega) (\Lambda) + (\omega\Delta \eta)(\Lambda)\\
&\le 2\max((\eta\Delta
    \omega) (\Lambda),\, (\omega\Delta \eta)(\Lambda))\\
& = d_1(\eta,\, \omega).
  \end{align*}
 The Rademacher property is then established for $(\GD, Q)$.
\end{proof}
\begin{theo} \label{T:RF1} Let $\mu$ and $\nu$ two probability
  measures on $\Gamma_\Lambda$ such that $\d \nu =L\d \mu$. We have,
  \begin{equation*}
    \T_{d_1}(\mu, \, \nu)\le \esp{}{ \int_{\Lambda} | (\id+\LD)^{-1}\GD_s L | \d\rho(s)},
  \end{equation*}
where $\LD=\DD\GD$.
\end{theo}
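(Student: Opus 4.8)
The plan is to re-run the argument of Theorem \ref{T:E1} in the Poisson/discrete-gradient setting, but to keep the modulus \emph{outside} the time integral rather than inside. We have already verified in Lemma \ref{lem:lip_discrete} that $(\GD,\, Q)$ enjoys the Rademacher property for $d_1$, and we observed after \eqref{eq:5} that $\PD$ is ergodic, so the hypotheses of Theorem \ref{T:E1} are in force with $\nabla=\GD$ and $Q(\omega,\,\d s)=\d\rho(s)$. Rather than quote the conclusion \eqref{eq:8} directly (which places the modulus inside $\int_0^{+\infty}\cdots\,\d t$ and therefore yields an in general larger right-hand side), I would restart from the exact identity established in the course of that proof: for every $F\in d_1-\Lip_1$,
\begin{equation*}
  \int_X F\,\d\mu-\int_X F\,\d\nu=\int_{X\times\Lambda}\int_0^{+\infty}\GD_s F\;\GD_s\PD_t L\;\d t\,\d\rho(s)\,\d\mu(\omega).
\end{equation*}

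The crucial point is that $\GD_s F$ does not depend on $t$, so it factors out of the inner integral, leaving $\int_0^{+\infty}\GD_s\PD_t L\,\d t$, which I would identify via the chaos decomposition. Writing $L=\sum_{n\ge0}J_n(f_n)$, formula \eqref{eq:5} gives $\PD_t L=\sum_{n}e^{-nt}J_n(f_n)$, whence $\GD_s\PD_t L=\sum_{n\ge1}n\,e^{-nt}J_{n-1}(f_n(\cdot,s))$; integrating in $t$ and using $\int_0^{+\infty}n\,e^{-nt}\,\d t=1$ collapses this to $\sum_{n\ge1}J_{n-1}(f_n(\cdot,s))$. On the other hand $\GD_s L=\sum_{n\ge1}n\,J_{n-1}(f_n(\cdot,s))$, and since $\LD=\DD\GD$ is the number operator acting as multiplication by $m$ on the $m$-th chaos, $(\id+\LD)^{-1}$ acts as multiplication by $1/(1+m)$; applied to the $(n-1)$-th chaos component of $\GD_s L$ this produces the factor $1/n$, so that $(\id+\LD)^{-1}\GD_s L=\sum_{n\ge1}J_{n-1}(f_n(\cdot,s))$. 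The two expressions agree, giving the operator identity
\begin{equation*}
  \int_0^{+\infty}\GD_s\PD_t L\,\d t=(\id+\LD)^{-1}\GD_s L.
\end{equation*}

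With this identity in hand I would conclude by the same $L^\infty-L^1$ estimate as in Theorem \ref{T:E1}: invoking the Rademacher bound \eqref{eq:4}, namely $|\GD_s F|\le1$, we obtain
\begin{equation*}
  \left|\int_X F\,\d\mu-\int_X F\,\d\nu\right|\le\int_{X\times\Lambda}\left|(\id+\LD)^{-1}\GD_s L\right|\,\d\rho(s)\,\d\mu(\omega),
\end{equation*}
and taking the supremum over $F\in d_1-\Lip_1$ while invoking the Kantorovitch-Rubinstein duality \eqref{eq:9} yields the claimed bound.

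The step I expect to be the main obstacle is the chaos computation together with the interchanges of summation and integration it rests on: one must justify applying $\GD_s$ term by term to the series \eqref{eq:5}, swapping $\sum_n$ with $\int_0^{+\infty}\d t$, and the joint integrability in $(\omega,s,t)$ that legitimizes the Fubini steps. The algebraic identity above is transparent when $L$ has a finite chaos expansion, but for a general density $L$ one needs the series $\sum_{n\ge1}J_{n-1}(f_n(\cdot,s))$ to converge in a suitable sense and the resulting process to lie in $\Dom\DD$; establishing these convergence and domain conditions, rather than the formal manipulation, is where the real care is needed.
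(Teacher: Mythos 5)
Your proof is correct and follows essentially the same route as the paper's: the paper obtains the key identity $\int_0^{+\infty}\GD_s\PD_t L\,\d t=(\id+\LD)^{-1}\GD_s L$ by combining the commutation relation $\GD_s\PD_t=e^{-t}\PD_t\GD_s$ (itself verified via the chaos decomposition) with the resolvent formula $\int_0^{+\infty}e^{-t}\PD_t F\,\d t=(\id+\LD)^{-1}F$, which is exactly the computation you carry out directly on the chaos expansion. Your remark that one must return to the exact identity from the proof of Theorem \ref{T:E1} and factor out $\GD_s F$ before taking moduli --- rather than quote \eqref{eq:8}, where the modulus sits inside the time integral --- is the correct reading of what the paper implicitly does when it writes the modulus outside the $\d t$ integral.
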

\begin{proof}
  It is  easily seen using chaos decomposition that
  \begin{equation*}
    \GD_{s}\PD_tF = e^{-t}\PD_t\GD_sF \quad \text{for all } s \in
    \lambda, \, \text{ for all } t \in \R^+
  \end{equation*}
  and it is a general property of semi-groups and their generator that
  \begin{equation*}
    \int_{0}^{+\infty}e^{-t}\PD_{t}F \d t=(\id + \LD)^{-1}F,
  \end{equation*}
for any $F\, : \, \Omega \to \R$.
We then infer from (\ref{eq:8}) that
\begin{align*}
\T_{d_1}(\mu, \, \nu)&\le
\esp{}{\int_{\Lambda} |\int_0^{+\infty}
e^{-t}\PD_{t}\GD_{s}L \d t| \d \rho(s)}\\
&=\esp{}{ \int_{\Lambda}|(\id +\LD)^{-1}(\GD_{s}L)|\d\rho(s)}
\end{align*}
\end{proof}
\begin{remar}
  Note that the very analog of this inequality on Wiener space was proved by a
  different though related way in \cite{FU04}.
\end{remar}
% Following the very same lines, one can prove a log-Sobolev type identity
% for Poisson process.
% \begin{theo}
% \label{thm:log-sob}
%   Let $L$ be positive random variable on $\Gamma_{\Lambda}$ with
%   $\esp{}{L}=1$. Then, we 
%   have
%   \begin{equation*}
%    | \esp{}{L\ln L}-\esp{}{\ln L}|\le
%    \esp{}{\frac{1}{L}(\id+\LD)^{-1}|\GD L|^{2}_{L^{2}(\rho)}}^{1/2}\esp{}{\frac{1}{L}\int_{\Lambda}|\GD_{s}L|^{2}\d\rho(s)}^{1/2}.
%   \end{equation*}
% \end{theo}
% \begin{proof}
%   Apply (\ref{eq:2}) to $F=\ln L$ then,
%   \begin{equation*}
%     \esp{}{L\ln L}-\esp{}{\ln L}=\esp{}{\int_{\Lambda}\GD_{s}\ln L
%       \left(\int_{0}^{+\infty}e^{-t}\PD_{t}\GD_{s}L\d t\right)\d \rho(s)}.
%   \end{equation*}
% The key fact is to note that 
% \begin{equation*}
%   \GD_{s}\ln L=\ln(1+\frac{\GD_{s}L}{L}).
% \end{equation*}
% Since $\ln(1+x)\le x$ for any $x>-1$,
% \begin{align*}
%   | \esp{}{L\ln L}-\esp{}{\ln L}|&\le \esp{}{\int_{\Lambda}\left|\frac{\GD_{s}L}{L}\right|
%       \left|\int_{0}^{+\infty}e^{-t}\PD_{t}\GD_{s}L\d t\right|\d
%       \rho(s)}\\
% &\le \esp{}{\frac{1}{L}\int_{\Lambda}|\GD_{s}L|^{2}\d\rho(s)}^{1/2} \esp{}{\int_{0}^{+\infty}e^{-t}(\PD_{t}\GD_{s}L)^{2}\d t\d
%       \rho(s)}^{1/2},
% \end{align*}
% according to Jensen inequality.
% \end{proof}
\subsection{Derivation on Poisson space}
In this section we introduce another stochastic gradient on
$\Gamma_\Lambda$ which is a derivation -- see \cite{MR99d:58179}.  Let
$V(\Lambda)$ be the set of ${\mathcal C}^\infty$ vector fields on
$\Lambda$ and $V_0(\Lambda)\subset V(\Lambda),$ the subset consisting
of all vector fields with compact support.  For $v\in V_0(\Lambda),$
for any $x\in \Lambda,$ the curve
\begin{displaymath}
  t\mapsto {\mathcal V}_t^v(x)\in \Lambda
\end{displaymath}
is defined as the solution of the following Cauchy problem
\begin{equation}
  \label{eq:D2}
  \begin{cases}
    \dfrac{d}{dt}{\mathcal V}_t^v(x)&=v({\mathcal V}_t^v(x)),\\
    {\mathcal V}_0^v(x)&=x.
  \end{cases}
\end{equation}
The associated flow $({\mathcal V}_t^v,\, t\in \R)$ induces a curve
$({\mathcal V}_t^v)^*\eta=\eta\circ ({\mathcal V}_t^v)^{-1}$, $t\in
\R$, on $\Gamma_\Lambda$: If $\eta=\sum_{x\in \eta}\varepsilon_x$ then
$({\mathcal V}_t^v)^*\eta=\sum_{x\in \eta}\varepsilon_{{\mathcal
    V}_t^v(x)}.$ We are then in position to define the notion of
differentiability on $\Gamma_\Lambda$.  A measurable function $F\, :
\, \Gamma_\Lambda \to \R$ is said to be differentiable if for any
$v\in V_0(\Lambda)$, the following limit exists:
\begin{equation*}
  \lim_{t\to 0} t^{-1}\left( F({\mathcal V}_t^v(\eta))-F(\eta)\right).
\end{equation*}
We then denote $\GC_vF(\eta)$ the preceding limit.  We denote by
$\GC_sF$ we corresponding gradient. It verifies~:
\begin{equation*}
  \GC_v F (\omega) =\int_\Lambda  \GC_s F (\omega) v(s) \d \omega(s).
\end{equation*}
The square norm of  $\GC F$ is given by
\begin{equation*}
  \int_{\Lambda} \GC_{s}F \d \omega(s),
\end{equation*}
so that we are in the framework of Hypothesis 1 if we take 
\begin{equation*}
  Q(\omega,\, ds)=d \omega(s)=\sum_{x\in \omega }\varepsilon_{x}(d s),
\end{equation*}
where $\varepsilon_{a}$ is the Dirac mass in $a$.
For a random variable $F : \Gamma_\Lambda \to \R$, and random process
$u : \Gamma_\Lambda \times \Lambda \to \R$, we define the adjoint
operator of $\GC$ denoted by $\DC$ by:
\begin{equation*}
  \esp{}{ < \GC F \, , \, u >_{L^2(\rho)} } =\esp{}{ F \, \delta^2u },
\end{equation*}
provided both sides exist, i.e.,
\begin{equation*}
  \left|\esp{}{ < \GC F \, , \, u >_{L^2(\rho)} }\right|\le c
  \Vert F\Vert _{L^2}^2.
\end{equation*}
Consider now $\LC =  \DC\GC$ and the associated semi-group semigroup
$\{ \PC_t, t \in \R\}$. The distance of interest is here the
Wassertein's distance (see \cite{Decreusefond:2006cv,MR1730565}):
\begin{equation}\label{eq:7}
  % \label{eq:definition_du_cout}
  d_2(\eta_1,\eta_2)=\left[\inf \left\{ \int d_0(x,y) d\beta(x,y),\
      \beta\in \Gamma_{\eta_1,\eta_2}\right\}\right]^{1/2},
\end{equation}
where $\Gamma_{\eta_1,\eta_2}$ denotes the set of
$\beta\in\Gamma_{\Lambda\times \Lambda}$ having marginals $\eta_1$ and
$\eta_2.$ The ergodicity of $\PC$ is proved in \cite{MR99d:58179} and
the Rademacher property is the object of \cite{MR1730565}.

On the other hand, there is no known commutation relationships between
$\GC$ and $\PC_t$ hence theorem \ref{T:E1} entails that
\begin{theo} \label{T:RF2} Let $\mu$ and $\nu$ two probability
  measures on $\Gamma_\Lambda$ such that $\d \nu =L\d \mu$. We have,
  \begin{equation*}
    \T_{d_2}(\mu, \, \nu) \le \esp{}{   \int_0^{+\infty}  \left| \GC \PC_t L \right|  \d t }.
  \end{equation*}
\end{theo}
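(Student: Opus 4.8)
The plan is to invoke the generic bound \eqref{eq:8} of Theorem \ref{T:E1} with the continuous gradient $\GC$, the kernel $Q(\omega,\d s)=\d\omega(s)$, and the associated Ornstein-Uhlenbeck semi-group $\PC_t$. First I would verify that all hypotheses of Theorem \ref{T:E1} are in place: Hypothesis \ref{H:1} holds by the choice of $Q$ together with the square-integrability of $\GC F$ recorded above; the ergodicity of $\PC$ is supplied by \cite{MR99d:58179}; and the Rademacher property for the pair $(\GC,Q)$ relative to the distance $d_2$ is supplied by \cite{MR1730565}. These three ingredients are exactly the assumptions needed to apply the theorem, so no new verification is required here.

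With the hypotheses established, the conclusion follows by substituting the present data directly into \eqref{eq:8}. Writing out the right-hand side of \eqref{eq:8} with $\nabla=\GC$ and $Q(\omega,\d s)=\d\omega(s)$ gives
\begin{equation*}
  \T_{d_2}(\mu,\nu)\le \int_{X\times\Lambda}\int_0^{+\infty}\left|\GC_s\PC_t L\right|\d t\,\d\omega(s)\,\d\mu(\omega),
\end{equation*}
and recognizing the integral against $\d\omega(s)$ together with the expectation against $\d\mu$ as $\esp{}{\int_0^{+\infty}\left|\GC\PC_t L\right|\d t}$, interpreted in the $L^2(\rho)$-norm of the gradient as in the definition of $\GC$, yields precisely the stated inequality. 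Thus the proof is essentially a specialization of the generic scheme rather than a fresh argument.

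The main obstacle, and the reason this statement is genuinely weaker than its discrete analogue in Theorem \ref{T:RF1}, is the absence of any commutation relation between $\GC$ and $\PC_t$. In the discrete case one exploits $\GD_s\PD_t=e^{-t}\PD_t\GD_s$ to pull the semi-group outside the gradient and collapse the time integral into the resolvent $(\id+\LD)^{-1}$. Here no such identity is available, so the time integral $\int_0^{+\infty}\left|\GC\PC_t L\right|\d t$ cannot be simplified and must be left as is; the bound is therefore stated in the raw integrated form coming straight out of \eqref{eq:8}. The only subtlety I would take care over is the interchange of the time integral with the integration against $Q\,\d\mu$, which is justified by the $L^\infty$-$L^1$ duality already invoked at the end of the proof of Theorem \ref{T:E1} together with Fubini, so that the finiteness of the right-hand side is what licenses the whole computation.
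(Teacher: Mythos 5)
Your proposal matches the paper's own argument: the paper likewise obtains Theorem \ref{T:RF2} by direct application of Theorem \ref{T:E1} with $Q(\omega,\d s)=\d\omega(s)$, citing \cite{MR99d:58179} for ergodicity of $\PC$ and \cite{MR1730565} for the Rademacher property, and observing that the lack of a commutation relation between $\GC$ and $\PC_t$ leaves the bound in its raw integrated form. Your write-up is correct and, if anything, more explicit than the paper's.
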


\section{Applications}
\subsection{Distance between two Poisson processes} \label{S-DPP}
\begin{theo}
  Consider $\mu$ and $\nu$ two Poisson probability measures on
  $\Lambda\subset \R^{n}$. The intensity of $\mu$ is the Lebesgue
  measure on $\Lambda$ and that of $\nu$ is $h(s)d s $ with $h \in L^2(\Lambda)$ deterministic.
  Then, we have the following bound~:
  \begin{equation} \label{E-RES} \T_{d_1}(\mu, \, \nu)~\leq~C \,
    ||h||_{L^{1}(\Lambda)} \, \exp(\frac{1}{2} \,
    ||h-1||_{L^{2}(\Lambda)}^2).
  \end{equation}
\end{theo}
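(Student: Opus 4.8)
The plan is to specialise the general bound of Theorem~\ref{T:RF1} to the present pair of Poisson measures, after which everything reduces to an explicit computation with the density $L=\d\nu/\d\mu$. So the first step is to identify $L$. Since $\mu$ and $\nu$ are Poisson with intensities $\d s$ and $h(s)\,\d s$, the likelihood ratio between two Poisson laws reads
\begin{equation*}
  L(\eta)=\exp\left(\int_\Lambda(1-h(s))\,\d s\right)\prod_{x\in\eta}h(x),
\end{equation*}
a nonnegative functional with $\esp{\mu}{L}=1$. The key structural observation is that adding a point at $s$ simply multiplies the product $\prod_{x\in\eta}h(x)$ by $h(s)$, so that the discrete gradient acts multiplicatively:
\begin{equation*}
  \GD_s L(\eta)=L(\eta+\varepsilon_s)-L(\eta)=(h(s)-1)\,L(\eta).
\end{equation*}

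Next I would insert this into Theorem~\ref{T:RF1} and use the commutation relation $\GD_s\PD_t=e^{-t}\PD_t\GD_s$ recorded in its proof. Since for fixed $s$ the quantity $h(s)-1$ is a scalar and $\PD_t$ is linear, it pulls out of the resolvent,
\begin{equation*}
  (\id+\LD)^{-1}\GD_s L=\int_0^{+\infty}e^{-t}\PD_t\GD_s L\,\d t=(h(s)-1)\,(\id+\LD)^{-1}L.
\end{equation*}
Integrating in $s$ then fully separates the two variables and extracts the $L^1$ factor, giving
\begin{equation*}
  \T_{d_1}(\mu,\nu)\le\|h-1\|_{L^1(\Lambda)}\,\esp{\mu}{\left|(\id+\LD)^{-1}L\right|}.
\end{equation*}

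It remains to bound $\esp{\mu}{|(\id+\LD)^{-1}L|}$. By Cauchy--Schwarz it is at most $\|(\id+\LD)^{-1}L\|_{L^2(\mu)}$, and since $(\id+\LD)^{-1}$ multiplies the $n$-th chaos by $1/(n+1)\le1$ it is an $L^2(\mu)$-contraction, leaving $\|L\|_{L^2(\mu)}$. Evaluating this norm is the crux of the argument and the step I expect to be the main obstacle: using the Laplace functional of $\mu$ in the multiplicative form $\esp{\mu}{\prod_{x\in\eta}g(x)}=\exp(\int_\Lambda(g-1)\,\d s)$ with $g=h^2$, one computes
\begin{equation*}
  \esp{\mu}{L^2}=\exp\left(2\int_\Lambda(1-h)\,\d s+\int_\Lambda(h^2-1)\,\d s\right)=\exp\left(\int_\Lambda(h-1)^2\,\d s\right),
\end{equation*}
whence $\|L\|_{L^2(\mu)}=\exp(\tfrac12\|h-1\|_{L^2(\Lambda)}^2)$, exactly the exponential of \eqref{E-RES}.

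Collecting the pieces yields $\T_{d_1}(\mu,\nu)\le\|h-1\|_{L^1(\Lambda)}\exp(\tfrac12\|h-1\|_{L^2(\Lambda)}^2)$; the stated inequality \eqref{E-RES} then follows by comparing the two $L^1$ norms of $h$ over $\Lambda$, the constant $C$ absorbing this comparison. Beyond this last arithmetic, the points needing care are the Fubini interchange of the $t$-integral, the $s$-integral and $\esp{\mu}{\cdot}$, and the square-integrability of $L$ that legitimises the chaos decomposition and the contraction estimate; both rest on the finiteness of $\|h-1\|_{L^2(\Lambda)}$. I note in passing that, since $L\ge0$ and $\PD_t$ is positivity preserving, one could instead use $\esp{\mu}{(\id+\LD)^{-1}L}=\esp{\mu}{L}=1$ to obtain the sharper $\T_{d_1}(\mu,\nu)\le\|h-1\|_{L^1(\Lambda)}$; the route above is taken only to reproduce \eqref{E-RES}.
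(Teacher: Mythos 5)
Your route is the same as the paper's: write down the Girsanov density $L$, compute its discrete gradient, feed the result into Theorem~\ref{T:RF1}, bound the resolvent term by $\Vert L\Vert_{L^2(\mu)}$ via Cauchy--Schwarz and the chaos contraction, and evaluate $\esp{\mu}{L^2}=\exp(\Vert h-1\Vert_{L^2(\Lambda)}^2)$. All of these steps match the paper's proof, and your computations are correct. The one divergence is instructive: you find $\GD_sL=(h(s)-1)L$, whereas the paper's proof asserts $\GD_sL(\omega)=L(\omega+\varepsilon_s)-L(\omega)=h(s)L$, i.e., it drops the subtracted term; that slip is exactly what produces the factor $\Vert h\Vert_{L^1(\Lambda)}$ in the printed bound \eqref{E-RES}. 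Your argument therefore yields
\begin{equation*}
  \T_{d_1}(\mu,\nu)\le\Vert h-1\Vert_{L^1(\Lambda)}\exp\left(\tfrac12\Vert h-1\Vert_{L^2(\Lambda)}^2\right),
\end{equation*}
and, with your closing observation that $\esp{\mu}{(\id+\LD)^{-1}L}=\esp{\mu}{L}=1$ by positivity preservation, even the sharper and classical $\T_{d_1}(\mu,\nu)\le\Vert h-1\Vert_{L^1(\Lambda)}$.

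The only genuine gap is your final sentence. There is no constant $C$ with $\Vert h-1\Vert_{L^1(\Lambda)}\le C\,\Vert h\Vert_{L^1(\Lambda)}$ uniformly in $h$: take $h$ identically zero (or very small) on a set of positive measure, so that the right-hand side vanishes while the left-hand side stays bounded away from zero. So you cannot pass from your inequality to \eqref{E-RES} by adjusting $C$. In fact this same example shows that \eqref{E-RES} as printed cannot be correct in general (for $h\equiv0$ it would force $\T_{d_1}(\mu,\nu)=0$ although $\nu$ is then the law of the empty configuration), and the defect traces back to the gradient miscomputation in the paper. The honest fix is to state the theorem with $\Vert h-1\Vert_{L^1(\Lambda)}$ in place of $C\Vert h\Vert_{L^1(\Lambda)}$; with that emendation your proof stands as written once you delete the last comparison step, and the remaining technical points you flag (Fubini and $L\in L^2(\mu)$) are indeed guaranteed by $\Vert h-1\Vert_{L^2(\Lambda)}<\infty$.
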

\begin{proof}
  According to \cite{jacod},
  \begin{equation*}
    L(\omega)=\frac{\text{d}\nu}{\text{d}\mu}(\omega)=\exp\Bigl(\int_{\Lambda}
    \ln h(s) \d \omega(s)-\int_{\Lambda} (h(s)-1)\d s\Bigr).
  \end{equation*}
  By definition,
  \begin{equation*}
    \GD_{s}L(\omega)=L(\omega+\varepsilon_{s})-L(\omega)=h(s)L.
  \end{equation*}
  Hence, according to Theorem \ref{T:RF1},
  \begin{equation*}
    \T_{d_1}(\mu, \, \nu)~\leq~C \|h\|_{L^{1}(\Lambda)}\esp{}{|(I+\LD)^{-1}L|}.  
  \end{equation*}
  It is then well known, using for instance the chaos decomposition
  that
  \begin{equation*}
    \esp{}{|(I+\LD)^{-1}L|}\le\esp{}{L^{2}}^{1/2}
\end{equation*}
and
\begin{align*}
\esp{}{L^{2}}  &=\esp{}{\exp\Bigl(2\int_{\Lambda}
      \ln h(s) \d \omega(s)-2\int_{\Lambda} (h(s)-1)\d s\Bigr)}\\
    &=\esp{}{\exp\Bigl(\int_{\Lambda} \ln h^{2}(s) \d \omega(s)-\int_{\Lambda}
      (h^{2}(s)-1)\d
      s\Bigr)}\exp(\int_{\Lambda}(h(s)-1)^{2}\d s)\\
    &=\exp(\int_{\Lambda}(h(s)-1)^{2}\d s).
  \end{align*}
  The proof is thus complete.
\end{proof}

\subsection{Distance between a Poisson process and a Markov modulated
  Poisson process}

%input{distpoissMMPP.tex}
In this section we calculate a bound for the distance between a Poisson Process and an Markov modulated  Poisson process (MMPP for short).
\begin{de}
Consider $J$ an irreducible continuous time Markov chain with finite
state space. We denote by
 $m_J$ the finite number of states of $J$,
 $Q_J$ the infinitesimal generator of $J$,
 $\pi_J$ the stationary vector of $Q_J$. We assume also that we are
 given $(\lambda_{1},\cdots,\, \lambda_{m_{J}})$, $m_{J}$ non-negative real numbers. We denote by $\Psi$ the map
 which sends $i\in \{1,\cdots,\, m_{J}\}$ to $\lambda_{i}.$
An $MMPP(J, \Psi)$ is a point process the intensity of which is
given by $\Psi(J_{s})ds$. This means  that when $J$ is in state (
called a phase) $k$ $(1 \leq k \leq m_J)$ then the arrivals occurs according to a Poisson process of rate $\lambda_k$.
\end{de}
A detailed description of the MMPP with an emphasis on applicability to modeling is given in \cite{Meier-Hellstern:1989lr} and references therein.
\begin{theo}
Let $\mu$ be a Poisson process of intensity $\lambda$ and $\nu$ be a $MMPP(J, \Psi)$ then for a given $T \in \R^{+}$,
\begin{equation} \label{E:DISTPOIMMPP}
\T_{d_1}(\mu_T, \nu_T) \leq \esp{}{\int_0^T \left| \frac{\Psi(J_s)}{\lambda} \right| \lambda \d s 
\exp \left[ \frac{1}{2} \int_0^T \left| \frac{\Psi(J_s)}{\lambda} - 1 \right|^2 \lambda \d s \right]}
\end{equation}
\end{theo}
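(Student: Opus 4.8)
The plan is to mimic the structure of the proof of Theorem in Subsection \ref{S-DPP} (distance between two Poisson processes), since an $MMPP(J,\Psi)$ conditioned on the whole trajectory of the modulating chain $J$ is exactly an inhomogeneous Poisson process with deterministic intensity $\Psi(J_s)\,\d s$. The strategy is therefore to condition on $J$, apply the deterministic-intensity computation inside the conditional expectation, and then take the outer expectation over the law of $J$.

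\medskip

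First I would fix a realization of the path $(J_s)_{0\le s\le T}$. Conditionally on this path, the measure $\nu_T$ is the law of a Poisson process on $[0,T]$ with intensity $h(s)\,\d s$ where $h(s)=\Psi(J_s)$, while $\mu_T$ is Poisson of intensity $\lambda\,\d s$. To reuse Theorem \ref{T:RF1}, I would rescale so that the reference measure $\mu$ has intensity $\lambda\,\d s$; the relevant Radon--Nikodym derivative, by the same formula from \cite{jacod} used above, is
\begin{equation*}
  L(\omega)=\exp\Bigl(\int_0^T \ln\tfrac{\Psi(J_s)}{\lambda}\,\d\omega(s)-\int_0^T\bigl(\Psi(J_s)-\lambda\bigr)\,\d s\Bigr),
\end{equation*}
so that the role of $h$ in \eqref{E-RES} is played by the normalized intensity $\Psi(J_s)/\lambda$. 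The computation $\GD_s L=(\,\Psi(J_s)/\lambda\,)\,L$ and the $L^\infty$--$L^1$ bound $\esp{}{|(\id+\LD)^{-1}\GD_s L|}\le\esp{}{(\GD_s L)^2}^{1/2}$ then go through verbatim, and the second-moment computation reproduces the exponential factor $\exp\bigl[\tfrac12\int_0^T|\Psi(J_s)/\lambda-1|^2\lambda\,\d s\bigr]$ together with the linear factor $\int_0^T|\Psi(J_s)/\lambda|\,\lambda\,\d s$, all as functions of the frozen path $J$.

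\medskip

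Finally I would remove the conditioning. Writing $\T_{d_1}(\mu_T,\nu_T)$ as $\espcond{\;\cdot\;}{J}$ integrated against the law of $J$, the conditional bound just obtained is precisely the integrand appearing inside the outer $\esp{}{\cdot}$ in \eqref{E:DISTPOIMMPP}, and averaging over $J$ yields the claim. \textbf{The main obstacle} I anticipate is justifying this conditioning rigorously: one must check that the Rubinstein distance on the unconditional measures is controlled by the average over $J$ of the conditional Rubinstein bounds. This is really a statement that Theorem \ref{T:E1} (and hence \ref{T:RF1}) applies directly to the \emph{mixed} measure $\nu_T$, with $L$ now the random (i.e.\ $J$-dependent) density above; one then recognizes that $\esp{}{(\GD_s L)^2\mid J}^{1/2}$ can be bounded pathwise and that Jensen/Cauchy--Schwarz lets the square-root pass under the expectation in the desired direction. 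Once that measurability-and-averaging step is in place, the remaining algebra is identical to the deterministic case and requires no new ideas.
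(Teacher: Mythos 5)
Your proposal matches the paper's proof: condition on the trajectory of $J$, apply the Poisson-versus-Poisson bound \eqref{E-RES} pathwise with $h=\Psi(J_\cdot)/\lambda$, and then average over the law of $J$. The "main obstacle" you flag is resolved in the paper exactly along the lines you suggest: one writes $\T_{d_1}(\mu_T,\nu_T)$ via the Kantorovitch--Rubinstein duality \eqref{eq:9}, inserts the conditional expectation given $\mathfrak{F}_J^T=\sigma\{J_s,\ s\le T\}$, and pushes the supremum over $d_1$-Lipschitz functions inside the outer expectation, which produces the inequality in the required direction and reduces everything to the deterministic-intensity case.
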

\begin{proof}
By the Girsanov formula we have:
\begin{equation*}
    L_T(\omega)=\frac{\text{d}\nu_T}{\text{d}\mu_T}(\omega)=\exp\Bigl(\int_0^T
    \ln \frac{\Psi(J_s)}{\lambda} \d \omega(s)-\int_0^T (\frac{\Psi(J_s)}{\lambda}-1) \lambda \d s\Bigr).
  \end{equation*}
Consider  
\begin{math}
  \mathfrak{F}_{J}^{T}=\sigma\{J_{s}, s\le T\},
\end{math}
the history of $J$ up to time $T$.
 Now, 
\begin{align*}
\T_{d_1}(\mu_T, \nu_T) 
&= \sup_{F \in d_1-\Lip_1} \left( \esp{\mu_T}{F (L_T-1)}\right)\\
&= \sup_{F \in d_1-\Lip_1} \left( \esp{\mu_T}{ \espcond{F (L_T-1)}{\mathfrak{F}_{J}^{T}}} \right)\\
&\le \esp{\mu_T}{ \sup_{F \in d_1-\Lip_1} \left( \espcond{F (L_T-1)}{\mathfrak{F}_{J}^{T} }\right)}\\
\end{align*}
But conditioning on $J$, the intensity is deterministic so according
to Theorem \ref{E-RES},
\begin{align*}
\T_{d_1}(\mu_T, \nu_T) 
&= \esp{}{\int_0^T \left| \frac{\Psi(J_s)}{\lambda} \right| \lambda \d s \,
\exp \left[ \frac{1}{2} \int_0^T \left| \frac{\Psi(J_s)}{\lambda} - 1 \right|^2 \lambda \d s \right]},
\end{align*}
which ends the proof.
\end{proof}
One can then try to determinate the nearest Poisson process to a given
MMPP. For, we seek for $\lambda_{opt}$ such that the upper bound of \eqref{E:DISTPOIMMPP} is minimal.
It is clear that this minimum is directed by the exponential part of
the expression. It is thus enough to minimize
$$
\Phi_T(\lambda) = \int_0^T \left| \frac{\Psi(J_s)}{\lambda} - 1 \right|^2 \lambda \d s
$$
It is well known, that for large $T$, we have~:
\begin{equation*}
\Phi_T(\lambda) \underset{T \to \infty}{\thicksim} \lambda
T \ \sum_{i=1}^m \left| \frac{\lambda_i}{\lambda} - 1 \right|^2 \pi(i)
\end{equation*}
which is minimal for $\lambda = \lambda_{opt} = \sum_{i=1}^m \, \lambda_i \, \pi(i)$.
Moreover, 
\begin{align*}
\sum_{i=1}^m \left| \frac{\lambda_i}{\lambda_{opt}} - 1 \right|^2 \lambda_{opt} \pi(i)
&=  \sum_{i=1}^m  \frac{(\lambda_i - \lambda_{opt})^2}{\lambda_{opt}} \pi(i)\\
&= \frac{V_{opt}}{\lambda_{opt}}
\end{align*}
which is known in queueing theory as the burstiness of the
MMPP. Finally the distance between an MMPP and the Poisson process of
intensity equal the mean arrival rate of the MMPP is bounded by 
\begin{equation*}
  \lambda_{opt}T\,\exp\Bigl(\frac{V_{opt}}{2\lambda_{opt}}\, T\Bigr).
\end{equation*}
In queueing theory, the choice of 
$\lambda_{opt}$ as $\sum_{i=1}^m \, \lambda_i \, \pi(i)$ is imposed by
``load'' conservation: one can only compare queueing systems with the
same load, i.e., the load (or traffic) is defined as the product of
the mean arrival rate and of the mean service time. Our result shows
that this choice is likely to be the optimal one. Moreover, we are now
in position to evaluate precisely the error due to this approximation. 
Our bound gives a qualitative basis for the experimental rule that not
only the load was important to evaluate performance of queueing system but also the
so-called burstiness was to be taken into account.

\end{document}